\newtheorem{theorem}{Theorem}[section]
\newtheorem{corollary}[theorem]{Corollary}
\newtheorem{lemma}[theorem]{Lemma}
\theoremstyle{definition}
\numberwithin{equation}{section}
\newcommand{\PSL}{\mathrm{PSL}}
\newcommand{\PSU}{\mathrm{PSU}}
\newcommand{\POm}{\mathrm{P \Omega}}
\newcommand{\J}{\mathrm{J}}
\newcommand{\A}{\mathrm{A}}
\renewcommand{\S}{\mathrm{S}}
\newcommand{\D}{\mathrm{D}}
\newcommand{\nr}{\mathrm{nr}}
\renewcommand{\i}{\mathrm{i}}
\newcommand{\He}{\mathrm{He}}
\newcommand{\McL}{\mathrm{McL}}
\newcommand{\Co}{\mathrm{Co}}
\newcommand{\HN}{\mathrm{HN}}
\newcommand{\ON}{\mathrm{O'N}}
\newcommand{\Suz}{\mathrm{Suz}}
\newcommand{\Fi}{\mathrm{Fi}}
\newcommand{\HS}{\mathrm{HS}}
\newcommand{\Ly}{\mathrm{Ly}}
\newcommand{\Th}{\mathrm{Th}}
\newcommand{\Ru}{\mathrm{Ru}}
\newcommand{\Aut}{\mathrm{Aut}}
\newcommand{\B}{\mathrm{B}}
\newcommand{\M}{\mathrm{M}}
\newcommand{\Dmc}{\mathcal{D}}
\newcommand{\Bmc}{\mathcal{B}}
\newcommand{\Pmc}{\mathcal{P}}
\newcommand{\Cmc}{\mathcal{C}}
\renewcommand{\leq}{\leqslant}
\renewcommand{\geq}{\geqslant}
\renewcommand{\mod}[1]{\ (\mathrm{mod}{\ #1})}
\newcommand{\imod}[1]{\allowbreak\mkern4mu({\operator@font mod}\,\,#1)}
\begin{document}
 \title[]{Sporadic simple groups as flag-transitive  automorphism groups of symmetric designs}

 \author[S.H. Alavi]{Seyed Hassan Alavi}
 \thanks{Corresponding author: S.H. Alavi}
 \address{Seyed Hassan Alavi, Department of Mathematics, Faculty of Science, Bu-Ali Sina University, Hamedan, Iran.}
 \email{alavi.s.hassan@basu.ac.ir and  alavi.s.hassan@gmail.com}
 \author[A. Daneshkhah]{Ashraf Daneshkhah}
\address{Ashraf Daneshkhah, Department of Mathematics, Faculty of Science, Bu-Ali Sina University, Hamedan, Iran.}
\email{adanesh@basu.ac.ir and  daneshkhah.ashraf@gmail.com}

 \subjclass[]{05B05, 05B25, 20B25, 20D08}%
 \keywords{Symmetric design, flag-transitive, sporadic simple groups}
 \date{\today}%

\begin{abstract}
In this article, we study symmetric designs admitting flag-transitive, point-imprimitive almost simple automorphism groups with socle sporadic simple groups. As a corollary, we present a classification of symmetric  designs admitting flag-transitive automorphism group whose socle is a sporadic simple group, and in conclusion, there are exactly seven such designs, one of which admits a point-imprimitive automorphism group and the remaining are point-primitive. 
\end{abstract}

\maketitle

\section{Introduction}\label{sec:intro}

A \emph{symmetric $(v,k,\lambda)$ design} is an incidence structure $\Dmc=(\Pmc,\Bmc)$  consisting of a set $\Pmc$ of $v$ \emph{points} and a set $\Bmc$ of $v$ \emph{blocks} such that every point is incident with exactly $k$ blocks, and each pair of blocks is incident with exactly $\lambda$ points. If $2<k<v-1$, then  $\Dmc$ is called a \emph{nontrivial} symmetric design.
The complement of $\Dmc$ is a symmetric design whose points are the points of $\Dmc$ and blocks the complements of the blocks of $\Dmc$.
A \emph{flag} of $\Dmc$ is an incident pair $(\alpha,B)$, where $\alpha$ and $B$ are a point and a block of $\Dmc$, respectively. 
An \emph{automorphism} of a symmetric design $\Dmc$ is a permutation of the points permuting the blocks and preserving the incidence relation. The automorphism group $G$ of $\Dmc$ is called \emph{flag-transitive} if it is transitive on the set of flags of $\Dmc$, and $G$ is called \emph{anti-flag-transitive} if it is flag-transitive on the complement of $\Dmc$. If $G$ preserves a nontrivial $G$-invariant partitions on  the point set $\Pmc$, then $G$ is said to be \emph{point-imprimitive}, otherwise, $G$ is called point-primitive. We use the standard notation for the groups in \cite{b:Beth-I,b:Atlas,b:KL-90}.

The sporadic simple groups as one of the classes of finite simple groups have always been thought as an important tool to study the symmetries of geometries. For example, Witt designs as the Steiner systems are better understood by their automorphism groups which are the sporadic simple Mathieu groups discovered 70 years earlier, see \cite[Chapter IV]{b:Beth-I}. This paper is devoted to studying the sporadic almost simple groups $G$ as flag-transitive automorphism groups of symmetric designs. This problem have been studied for the point-primitive automorphism groups  \cite{a:Zhou-sym-sporadic}. Here, we are interested in the case where $G$ is point-imprimitive. 

\begin{theorem}\label{thm:main}
Let $\Dmc$ be a nontrivial symmetric design admitting a flag-transitive automorphism group $G$ with socle a sporadic simple group. Then $G$ is point-imprimitive if and only if  $\Dmc$ has parameter set $(144,66,30)$, $G=\M_{12}$, and the point-stabiliser and the block-stabiliser are isomorphic to $\PSL_{2}(11)$, and both are the intersection of two non-conjugate maximal subgroups of $G$ isomorphic to $\M_{11}$. 
\end{theorem}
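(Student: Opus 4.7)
The plan is to reduce the theorem to a finite case analysis by combining flag-transitivity with the classification of maximal subgroups of sporadic simple groups, then to eliminate all but one configuration via arithmetic constraints and verify the surviving design.

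Setting $H:=G_\alpha$, flag-transitivity of $G$ on a symmetric $(v,k,\lambda)$-design $\Dmc$ forces $v=|G:H|$, $\lambda(v-1)=k(k-1)$, and, since $G_\alpha$ is transitive on the $k$ blocks through $\alpha$, also $k$ divides $|H|$. Point-imprimitivity of $G$ is equivalent to the existence of a maximal subgroup $M<G$ with $H<M$: namely $M$ is the setwise stabiliser of the block of imprimitivity through $\alpha$, so $|G:M|$ (the number of parts) and $|M:H|$ (the common size of a part) are both proper divisors of $v$ exceeding $1$. The ``if'' direction of the theorem then amounts to verifying that the $(144,66,30)$-example really is point-imprimitive, which is immediate from $\PSL_2(11)<\M_{11}<\M_{12}$, so the substantive work is the ``only if'' direction.

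For each of the $26$ sporadic simple groups $G$, I would fix the overgroup $M$ from the list of maximal subgroups in the ATLAS, and then enumerate candidate point-stabilisers $H<M$ by descending through the subgroup lattice of $M$. For each chain $H<M<G$ I test the arithmetic: $v=|G:H|$, the integer $\lambda=k(k-1)/(v-1)$, $k\mid|H|$, $2<k<v-1$, and $\lambda<k$. Most candidates fall to the integrality of $\lambda$ alone; the remainder are discarded by divisibility of $k$ by the structural factors of $|H|$, or by parity and size constraints implicit in $\lambda(v-1)=k(k-1)$. Implementing this filter in GAP using available libraries of maximal subgroups reduces the problem to a tedious but finite computation; the main obstacle is the sheer volume of data, since the larger sporadics (for instance $\Co_1$ or the Monster) admit many maximal subgroups with many admissible subgroups $H$, requiring careful bookkeeping or already-tabulated subgroup lattices.

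The only triple surviving the filter should be $(v,k,\lambda)=(144,66,30)$ with $G=\M_{12}$, $M\cong\M_{11}$, $H\cong\PSL_2(11)$: one checks $|G|=95040=144\cdot 660$ and $66\cdot 65=30\cdot 143=4290$. For the structural statement I would record from the ATLAS that $\M_{12}$ has exactly two conjugacy classes of maximal subgroups isomorphic to $\M_{11}$ (interchanged by the outer automorphism of $\M_{12}$), that each such $\M_{11}$ has a single class of maximal subgroups isomorphic to $\PSL_2(11)$, and that suitable representatives from the two $\M_{12}$-classes of $\M_{11}$ intersect in a $\PSL_2(11)$, which is therefore maximal in both. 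To exhibit $\Dmc$ itself, take the points to be the $144$ right cosets of $H$ in $G$ and the blocks to be an appropriate single $G$-orbit on $k$-subsets of points; verification that the resulting incidence structure is a symmetric $(144,66,30)$-design on which $G$ acts flag-transitively is a short double-coset computation, and the resulting design coincides with the classical $\M_{12}$-invariant symmetric design of these parameters.
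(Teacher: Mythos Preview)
Your overall strategy---reduce to a finite search over chains $H<M<G$ and sieve by the arithmetic identities---matches the paper's, but the claim that this sieve leaves only $(144,66,30)$ is wrong, and that is where the argument breaks.

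Concretely, take $G=\HS$, $M=\PSU_3(5){:}2$ (index $176$), and $H=\S_7<M$ (index $50$), so $v=8800$ and $|H|=5040$. Then $(v,k,\lambda)=(8800,420,20)$ satisfies every test you list: $\lambda(v-1)=20\cdot 8799=175980=420\cdot 419=k(k-1)$, $k=420$ divides $|H|=5040$, and $\lambda<k<v-1$. A genuine subgroup $H$ of the right index exists, so descending the lattice does not help. The same happens for $G=\He$ (and $\He{:}2$) with $(v,k,\lambda)=(16660,4320,1120)$, where $H$ is an index-$2$ subgroup of $2^2.\PSL_3(4).\S_3$. The paper's tables record hundreds of such arithmetically admissible tuples; they are not killed by integrality, divisibility, or parity. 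What actually eliminates them is the subdegree condition you omit: flag-transitivity forces $k\mid \lambda e$ for every nontrivial subdegree $e$ of $G$ on $G/H$, and in the $\HS$ example the orbit of length $7$ gives $420\nmid 20\cdot 7$. Without this extra lever (or something of equal strength) your sieve does not close.

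Two smaller points. First, the hypothesis is that $\Soc(G)$ is sporadic, so $G$ ranges over the almost simple extensions as well; restricting to the $26$ simple groups misses cases like $G=\HS{:}2$ or $G=\Fi_{22}{:}2$ that must be treated (and are, in the paper, by the same subgroup-index and subdegree arguments). Second, even once $(144,66,30)$ with $G=\M_{12}$ is isolated, you still have to pin down the block-stabiliser: $\M_{12}$ has a \emph{maximal} $\PSL_2(11)$ of index $144$ whose orbits on $G/H$ have lengths $12$ and $132$, so it cannot stabilise a block of size $66$; only the non-maximal $\PSL_2(11)=\M_{11}\cap\M_{11}'$ has a $66$-orbit, and one must check that this orbit really is a base block. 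Your ``appropriate single $G$-orbit on $k$-subsets'' hides exactly this distinction.
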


In \cite{a:Praeger-imprimitive}, Praeger and Zhou  studied point-imprimitive symmetric $(v,k,\lambda)$ designs, and they improved the upper bound of $k$ (and so $v$) in terms of $\lambda$ given by O’Reilly-Regueiro  \cite{a:Regueiro-reduction}, and a list of possible parameters for $\lambda\leq 10$ is given some of which is still open. This motivates Law, Praeger and Reichard \cite{a:Praeger-96-20-4} to classify flag-transitive symmetric $(96, 20, 4)$ designs, and as a result of their work,  all symmetric designs with $\lambda\leq 4$ admitting a flag-transitive, point-imprimitive
subgroup of automorphisms are known. In 2022, 
Mandi\'{c}  and \v{S}uba\v{s}i\'{c} \cite{a:Mandic-Sym-Imp-lam10} have treated all other open possibilities reported in \cite{a:Praeger-imprimitive} excluding two sets of parameters. Recently, Montinaro \cite{a:Montinaro-sym-imp} made a contribution to the study of point-imprimitive symmetric designs when $k>\lambda(\lambda-3)/2$ with size of the intersection of block and imprimitivity class at least $3$. In our study of point-imprimitive symmetric designs with sporadic almost simple automorphism groups, we observe that the candidate parameter sets fall into type ``a'' or ``b'', the  first two parts in \cite[Theorem 1.1]{a:Praeger-imprimitive}. The outline and the method of proving Theorem \ref{thm:main} is described in Section~\ref{sec:method}. 

By Theorem \ref{thm:main} and the main result of \cite{a:Zhou-sym-sporadic}, we are now able to present a classification result for symmetric designs admitting flag-transitive sporadic almost simple automorphism groups. The detailed information about these designs are given in Section \ref{sec:ex}.

\begin{corollary}\label{cor:main}
Let $\Dmc$ be a nontrivial symmetric design admitting a flag-transitive automorphism group with socle a sporadic simple group. Then $v\in\{144,176,14080\}$ and $(\Dmc,G)$ is (up to isomorphism) as one of the rows in {\rm Table~\ref{tbl:main}}. 
\end{corollary}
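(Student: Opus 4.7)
The plan is to treat Corollary \ref{cor:main} as a direct synthesis of two classifications, split according to whether $G$ acts point-primitively or point-imprimitively on $\Pmc$. Since every flag-transitive automorphism group is either primitive or imprimitive on the point set, this dichotomy exhausts the possibilities, and the two halves are handled by distinct pre-existing results.

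If $G$ is point-imprimitive, I would apply Theorem \ref{thm:main} directly. It yields a unique pair, namely the symmetric $(144,66,30)$ design with $G=\M_{12}$, and point- and block-stabilisers both isomorphic to $\PSL_2(11)$ realised as the intersections of pairs of non-conjugate $\M_{11}$-subgroups of $\M_{12}$. This furnishes the $v=144$ row of Table~\ref{tbl:main}; the explicit construction of the design and the verification of flag-transitivity are deferred to Section~\ref{sec:ex}.

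If $G$ is point-primitive, I would invoke the main result of Zhou \cite{a:Zhou-sym-sporadic}, which classifies all flag-transitive, point-primitive symmetric designs whose automorphism group has sporadic socle. Reading off that list yields the remaining six pairs $(\Dmc,G)$, all with $v\in\{176,14080\}$; in each case the point- and block-stabilisers are maximal subgroups of $G$ that can be identified unambiguously from the \textsc{Atlas}. Concatenating the two cases gives precisely the seven rows of Table~\ref{tbl:main} and the stated parameter restriction $v\in\{144,176,14080\}$.

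The remaining work is essentially bookkeeping rather than new mathematics, and this is where I expect the main care to be required: transcribing Zhou's tables into the format of Table~\ref{tbl:main}, checking via \textsc{Atlas} data or a short \texttt{GAP} computation that the parameters, point-stabilisers and block-stabilisers in each row are mutually consistent and indeed produce a symmetric design with a flag-transitive action, and confirming that no two listed designs are isomorphic to one another. Once these consistency checks are completed, the corollary follows at once from the combination of Theorem \ref{thm:main} and \cite{a:Zhou-sym-sporadic}.
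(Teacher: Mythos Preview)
Your approach is exactly the one the paper takes: split according to whether $G$ is point-primitive or point-imprimitive, apply Theorem~\ref{thm:main} in the imprimitive case and the Tian--Zhou classification \cite{a:Zhou-sym-sporadic} in the primitive case, and collate the results. There is, however, a factual slip in your description of the primitive case: the Tian--Zhou list does \emph{not} give only $v\in\{176,14080\}$. Lines~2 and~3 of Table~\ref{tbl:main} are point-primitive symmetric $(144,66,30)$ designs with $G=\M_{12}$ (or $\M_{12}{:}2$) and $G=\M_{12}{:}2$ respectively; these also come from \cite{a:Zhou-sym-sporadic}, not from Theorem~\ref{thm:main}. So the value $v=144$ arises in both the primitive and imprimitive branches, and the concatenation of the two cases gives seven rows (one imprimitive, six primitive), not one plus six with disjoint $v$-values. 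Once you correct this bookkeeping, your argument matches the paper's two-line proof verbatim.
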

\begin{table}[h]
\scriptsize
\caption{Sporadic simple groups and symmetric $2$-designs.}\label{tbl:main}
\begin{tabular}{clllllllllll}
\noalign{\smallskip}\hline\noalign{\smallskip}
Line &
Design &
$v$ &
$k$ &
$\lambda$ &
$G$ &
$G_{\alpha}$ &
$G_{B}$ &
Comments & 
References \\
\noalign{\smallskip}\hline\noalign{\smallskip}
$1$ &
$\Dmc_{1}$ & 
$144$ & $66$ & $30$ &
$\M_{12}$ &
$\PSL_{2}(11)$ &
$\PSL_{2}(11)$ & 
point-imprimitive &
Theorem 1.1 \\
$2$ &
$\Dmc_{2}$ & 
$144$ & $66$ & $30$ &
$\M_{12}$, $\M_{12}{:2}$ &
$\PSL_{2}(11)$ &
$\PSL_{2}(11)$ & 
point-primitive &
\cite{a:Zhou-sym-sporadic} \\
$3$ &
   $\Dmc_{3}$ & 
   $144$ & $66$ & $30$ &
$\M_{12}:2$ & 
$\PSL_{2}(11):2$ &
$\PSL_{2}(11):2$ & 
point-primitive &
\cite{a:Zhou-sym-sporadic} \\
$4$ &
$\Dmc_{4}$ & 
$176$ & $126$ & $90$ &
$\M_{22}$ &
$\A_7$& 
$\A_{7}$ &  
point-primitive &
\cite{a:Dempwolff2001,a:Zhou-sym-sporadic} \\
$5$ &
$\Dmc_{5}$ & 
$176$ & $50$ & $14$ &
$\HS$ & 
$\PSU_3(5):2$ & 
$\PSU_3(5):2$ & 
point-primitive &
\cite{a:Kantor-85-2-trans, a:Zhou-sym-sporadic} \\
$6$ &
$\Dmc_{6}$ & 
$176$ & $126$ & $90$ &
$\HS$ & 
$\PSU_3(5):2$ & 
$\PSU_3(5):2$ & 
point-primitive &
\cite{a:Dempwolff2001,a:Zhou-sym-sporadic} \\
$7$ &
$\Dmc_{7}$ & 
$14080$ & $12636$ & $11340$ &
$\Fi_{22}$ &
$\POm_7(3)$ & 
$\POm_7(3)$&  
point-primitive &
\cite{a:Dempwolff2001,a:Zhou-sym-sporadic} \\
\noalign{\smallskip}\hline\noalign{\smallskip}
\multicolumn{10}{l}{Note: $G_{\alpha}$ is the  point-stabiliser of $\alpha$ and $G_{B}$ is the  block-stabiliser of $B$.}
\end{tabular}
\end{table}

\section{Examples and constructions}\label{sec:ex}

Here, we give the examples of symmetric $(v, k, \lambda)$ designs admitting flag-transitive almost simple automorphism groups with socle sporadic simple groups. The point-primitive designs (Lines 2-7 of Table \ref{tbl:main}) are obtained in \cite{a:Zhou-sym-sporadic}, see also \cite{a:Dempwolff2001,a:Kantor-75-2-trans}. The point-imprimitive design (Line 1 of Table \ref{tbl:main}) is obtained in Theorem~\ref{thm:main} and is constructed in Section \ref{sec:proof}. We use GAP \cite{GAP4} for the computations and in particular we use the software package ``\verb|design|'' in GAP  for design constructions. In what follows,  ``$\nr(G)$'' is the position of the primitive group $G$  in the list of the library of the primitive permutation groups in GAP. Moreover, the subgroups $H$ and $K$ are the point-stabiliser of a point and the block-stabiliser of a block of $\Dmc$, respectively.\smallskip

\noindent \textbf{Line 1.}
The design $\Dmc_{1}$ in this line is point-imprimitive with $\M_{12}$ as its automorphism  group. Here, we follow the proof of Theorem \ref{thm:main}, and present an explicit construction of this design. Let $G$, $H$ and $K$ be the groups generated by the permutations on $144$ points listed in Table \ref{tbl:gens}. Then $G$ is the permutation group on $144$ points which is isomorphic to $\M_{12}$, and $H$ and $K$ are both subgroups of $G$ of index $144$. The subgroups $H$ and $K$ are conjugate in $G$ which are isomorphic to $\PSL_{2}(11)$. These subgroups are not maximal, and they are in fact contained in maximal subgroups isomorphic to $\M_{11}$. The group $G$ is point-imprimitive with point-stabiliser $H$ fixing $1$. The subgroup $K$ has nontrivial orbits of length  $11$,
$11$, $55$ and $66$. The $K$-orbit $B$ given in \eqref{eq:D-M12-144} has length $66$ and it is a base block for the unique design $\Dmc_{1}$. As it is shown in the proof of Theorem \ref{thm:main}, the only possibility for a subgroup to be a block-stabiliser of index $12$ is when it is a subgroup of one of two non-conjugate maximal subgroups  (isomorphic to) $\M_{11}$. For any pairs of such subgroups with orbit length $66$, the associated symmetric designs are isomorphic. The full automorphism group of this design is $\M_{12}:2$ which is point-primitive, block-primitive and flag-transitive but not anti-flag-transitive. The same is true for $G=\M_{12}$ and the group $G$ has rank $5$ on its action on $144$ points. 
We note that the permutation representation of $G$ on $144$ points that we have given here is obtained by the coset action of $G$ on the set of right cosets of the intersection of two non-conjugate maximal subgroups isomorphic to $\M_{11}$. Indeed, we start with the permutation representation of $\M_{12}$ on $12$ points, and once we obtain the subgroups $H$ and $K$ as explained in the proof of Theorem \ref{thm:main}, we consider the right coset action of $\M_{12}$ on $144$ right cosets of $H$. We then find the presentations of $G$, $H$ and $K$ as in Table \ref{tbl:gens} and construct the design as described. \smallskip

\begin{table}[h]
\scriptsize
\caption{The generators of the group $G$ and its subgroups $H$ and $K$.}\label{tbl:gens}
\begin{tabular}{cp{5mm}p{140mm}}
\noalign{\smallskip}\hline\noalign{\smallskip}
Group & Generators\\
\noalign{\smallskip}\hline\noalign{\smallskip}%
 $G$ & $\alpha_{1}=$& (2,11)(3,6)(4,12)(9,10)(13,109)(14,119)(15,120)(16,114)(17,116)(18,112)(19,115)(20,113)(21,117)(22,118)
 (23,110)(24,111)(25,30)(26,29)(28,31)(33,34)(37,95)(38,91)(39,93)(40,90)(41,96)(42,86)(43,85)(44,89)(45,92)
 (46,94)(47,87)(48,88)(49,54)(50,59)(55,60)(56,58)(61,129)(62,128)(63,130)(64,124)(65,131)(66,132)(67,125)
 (68,127)(69,122)(70,126)(71,123)(72,121)(73,78)(74,83)(79,84)(80,82)(97,137)(98,144)(99,135)(100,133)
 (101,139)(102,136)(103,140)(104,134)(105,142)(106,138)(107,141)(108,143)\\
&  
$\alpha_{2}=$& (1,97,132)(2,107,128)(3,102,126)(4,108,123)(5,101,121)(6,99,129)(7,103,125)(8,104,127)(9,106,122)
(10,105,130)(11,98,124)(12,100,131)(13,78,36)(14,74,27)(15,83,26)(16,82,29)(17,80,28)(18,77,30)(19,79,32)
(20,84,31)(21,75,33)(22,81,34)(23,76,35)(24,73,25)(37,96,137)(38,92,141)(39,90,136)(40,87,143)(41,85,139)
(42,93,135)(43,89,140)(44,91,134)(45,86,138)(46,94,142)(47,88,144)(48,95,133)(49,61,111)(50,71,120)
(51,68,116)(52,62,119)(53,67,115)(54,66,109)(55,72,113)(56,70,114)(57,69,118)(58,63,117)(59,64,110)
(60,65,112)\\
$H$ & $\beta_{1}=$& (2,11)(3,6)(4,12)(9,10)(13,109)(14,119)(15,120)(16,114)(17,116)(18,112)(19,115)(20,113)(21,117)(22,118)
(23,110)(24,111)(25,30)(26,29)(28,31)(33,34)(37,95)(38,91)(39,93)(40,90)(41,96)(42,86)(43,85)(44,89)(45,92)
(46,94)(47,87)(48,88)(49,54)(50,59)(55,60)(56,58)(61,129)(62,128)(63,130)(64,124)(65,131)(66,132)(67,125)
(68,127)(69,122)(70,126)(71,123)(72,121)(73,78)(74,83)(79,84)(80,82)(97,137)(98,144)(99,135)(100,133)
(101,139)(102,136)(103,140)(104,134)(105,142)(106,138)(107,141)(108,143)\\
& $\beta_{2}=$& (3,6)(4,7)(5,11)(8,10)(13,128)(14,130)(15,122)(16,132)(17,121)(18,12)(19,124)(20,126)(21,127)(22,131)(23,125)
(24,129)(26,29)(27,30)(31,36)(32,33)(37,115)(38,109)(39,116)(40,114)(41,117)(42,111)(43,119)(44,113)(45,120)
(46,118)(47,112)(48,110)(49,51)(50,53)(52,60)(56,58)(61,86)(62,91)(63,85)(64,95)(65,94)(66,90)(67,88)(68,96)
(69,92)(70,89)(71,87)(72,93)(73,135)(74,139)(75,144)(76,143)(77,137)(78,134)(79,142)(80,141)(81,140)(82,136)
(83,138)(84,133)(99,104)(100,105)(101,106)(102,107)\\
&
$\beta_{3}=$& (2,9)(3,5)(4,11)(10,12)(14,16)(15,17)(19,20)(21,23)(25,69)(26,71)(27,
64)(28,68)(29,63)(30,65)(31,67)(32,72)
(33,70)(34,61)(35,62)(36,66)(37,54)(38,57)(39,51)(40,50)(41,53)(42,55)(43,59)(44,52)(45,60)(46,49)(47,58)
(48,56)(73,113)(74,119)(75,111)(76,118)(77,112)(78,115)(79,116)(80,110)(81,109)(82,117)(83,114)(84,120)
(85,90)(86,92)(87,88)(94,95)(97,139)(98,134)(99,142)(100,135)(101,136)(102,137)(103,140)(104,143)(105,133)
(106,138)(107,141)(108,144)(122,131)(123,130)(125,127)(126,129)\\
$K$ & $\gamma_{1}=$ & (1,106)(2,97)(3,108)(4,102)(5,98)(6,104)(7,107)(8,103)(9,99)(10,101)(11,105)(12,100)(13,64)(14,71)(15,61)
(16,70)(17,65)(18,67)(19,68)(20,72)(21,63)(22,66)(23,69)(24,62)(25,120)(26,110)(27,109)(28,112)(29,114)
(30,117)(31,113)(32,115)(33,116)(34,119)(35,111)(36,118)(37,48)(38,41)(39,
40)(45,47)(49,58)(51,59)(53,57)
(55,56)(73,121)(74,130)(75,128)(76,132)(77,127)(78,125)(79,122)(80,131)(81,124)(82,123)(83,129)(84,126)
(85,93)(86,88)(87,94)(91,96)(133,137)(135,139)(136,142)(138,140)\\
& 
$\gamma_{2}=$& (1,136)(2,134)(3,135)(4,143)(5,142)(6,133)(7,137)(8,139)(9,138)(10,144)(11,
141)(12,140)(13,20)(14,22)(15,17)
(21,24)(25,111)(26,116)(27,115)(28,109)(29,120)(30,117)(31,113)(32,112)(33,118)(34,119)(35,114)(36,110)
(37,75)(38,82)(39,77)(40,81)(41,74)(42,83)(43,78)(44,73)(45,76)(46,80)(47,79)(48,84)(49,66)(50,68)(51,72)
(52,67)(53,61)(54,69)(55,62)(56,63)(57,
65)(58,71)(59,64)(60,70)(85,89)(86,96)(87,88)(91,94)(98,106)(99,100)
(103,
108)(104,107)(122,132)(123,130)(124,127)(126,128)
\\
& 
$\gamma_{3}=$& (1,11)(3,4)(6,10)(9,12)(13,110)(14,109)(15,120)(16,112)(17,113)(18,117)(19,116)(20,115)(21,114)(22,118)
(23,119)(24,111)(25,40)(26,42)(27,41)(28,
37)(29,46)(30,47)(31,39)(32,48)(33,38)(34,43)(35,45)(36,44)(49,128)
(50,121)(51,130)(52,132)(53,124)(54,125)(55,127)(56,131)(57,122)(58,129)(59,126)(60,123)(61,65)(62,67)
(68,71)(70,72)(73,103)(74,108)(75,106)(76,
107)(77,99)(78,97)(79,101)(80,100)(81,104)(82,98)(83,105)(84,102)
(85,
91)(86,94)(87,90)(93,95)(133,144)(136,139)(137,140)(142,143)\\
\noalign{\smallskip}\hline\noalign{\smallskip}
\end{tabular}
\end{table}

\noindent \textbf{Line 2.} This design is one of the  two point-primitive symmetric designs on $144$ points. The full-automorphism group of this design is $\M_{12}:2$. The groups $\M_{12}$ and $\M_{12}:2$ are  flag-transitive and point-primitive. Let $G=\M_{12}$. The point-stabiliser is the maximal subgroup isomorphic to $\PSL_{2}(11)$. This symmetric design was first constructed by Wirth \cite{t:Wirth}. This design is denoted by $\Dmc_{1}$ in~\cite{a:Zhou-sym-sporadic}. Here $\nr(G)=3$, that is to say, $G=\M_{12}$ is the $3$rd group in the list of primitive permutation groups in GAP. The base block is 
\begin{align*}
B_2=\{&6,7,9,10,13,17,18,19,20,21,32,33,34,36,37,38,46,49,51,52,53,54,58,60,61,62,63,\\
&64,65,66,67,68,69,71,72,74,78,79,81,85,87,89,90,92,94,95,96,98,102,105,106, 109,\\
&110,114,118,121,122,126,127,128,129,132,134,137,141,142\}.
\end{align*}
The point-stabiliser $H$ and the block-stabiliser $K$ of $\Dmc$ are maximal subgroups of $G$, both isomorphic to $\PSL_{2}(11)$ but not conjugate. The full automorphism group of $\Dmc_{2}$ is $\M_{12}:2$ which is point-primitive, block-primitive and flag-transitive, and both $G$ and $\Aut(\Dmc_{2})$ are not anti-flag-transitive.  
\smallskip

\noindent \textbf{Line 3.}
The design in this line is the second symmetric design with parameter set  $(144,66,30)$. Its automorphism group  $\M_{12}{:}2$ is flag-transitive and point-primitive with point-stabiliser (isomorphic to) $\PSL_{2}(11){:}2$. This design is denoted by $\Dmc_{2}$~in \cite{a:Zhou-sym-sporadic}, and it is constructed by the $5$th group $G=\M_{12}:2$ in the list of primitive permutation groups in GAP and the  base block is
\begin{align*}
B_{3}=\{&3,5,6,7,8,10,11,14,16,20,21,23,25,26,27,28,29,33,39,40,44,45,47,49,51,53,54,\\
&
56,57,59,60,61,62,64,65,71,72,73,75,76,79,82,84,85,90,95,96,97,103,104,107, 109,\\
&111,112,113,115,116,119,126,132,134,137,138,142,143,144\}.
\end{align*}
The group $G$ is flag-transitive, point-primitive and block-primitive but not anti-flag-transitive. The point-stabiliser and the block-stabiliser of $\Dmc_{3}$ are isomorphic to $\PSL_{2}(11):2$. We observe that if we take the socle of $G$ and construct the design with the base block $B_{3}$, then we obtain a design which is isomorphic to $\Dmc_{1}$. 
\smallskip

\noindent \textbf{Line 4.} The design in this line arose from studying rank $3$ automorphism groups of symmetric designs \cite{a:Dempwolff2001}. Here $G=\M_{22}$ and $\nr(G)=3$, and if we take the base block $B_{4}$ below, then we obtain $\Dmc_{4}$ as in Line 4 of Table \ref{tbl:main}. This design is denoted by $\Dmc_{3}$~in \cite{a:Zhou-sym-sporadic}. 
\begin{align*}
B_{4}=\{&1, 3, 4, 5, 6, 7, 10, 12, 13, 14, 16, 17, 19, 23, 24, 25, 27, 28, 29, 31, 32, 33, 34, 35, 37, 38, 39,\\
& 40, 41, 42, 43, 44, 45, 46, 
47, 48, 50, 53, 54, 56, 60, 62, 63, 64, 65, 66, 68, 69, 70, 73, 74, 75,\\ 
&76, 77, 78, 79, 80, 81, 82, 83, 84, 85, 86, 88, 89, 90, 91, 92, 
94, 95, 96, 97, 99, 100, 101, 102, \\ 
&103, 105, 106, 107, 108, 109, 112, 114, 117, 119, 120, 123, 124, 125, 126, 128, 129, 132,\\
& 134, 138, 
139, 140, 141, 142, 143, 144, 147, 148, 149, 151, 152, 153, 154, 155, 156, 159, 160,\\
& 161, 162, 163, 164, 165, 167, 168, 171, 172, 173, 
174, 175, 176\}.
\end{align*}
The full automorphism group of $\Dmc_{4}$ is $\HS$   which is point-primitive, block-primitive, flag-transitive and anti-flag-transitive with both  point-stabiliser and block-stabiliser isomorphic to $\PSU_{3}(5){:}2$ but non-conjugate. The group $G=\M_{22}$ is point-primitive, block-primitive, flag-transitive but not anti-flag-transitive, and both point-stabiliser and block-stabiliser are isomorphic to $\A_{7}$.  
\smallskip  

\noindent \textbf{Line 5.} The design in this line is the unique $2$-transitive symmetric $(176,50,14)$ design \cite{a:Kantor-85-2-trans}. This design is denoted by $\Dmc_{4}$ in \cite{a:Zhou-sym-sporadic}. The point-stabiliser (block-stabiliser) of this design is isomorphic to $\PSU_{3}(5){:}2$. The group $G=\HS$ is flag-transitive, point-primitive, block-primitive and anti-flag-transitive, and the base block is 
\begin{align}
\nonumber B_{5}=\{&1,7,10,11,15,17,25,26,30,33,36,40,41,43,51,59,64,66,69,85,90,94,97,\\
&99,106,109,113,114,115,119,122,132,136,142,143,148,150,152,153,154,155,\label{eq:B5}\\
\nonumber & 160,162,164,166, 167,168,169,171,176\}.
\end{align} 
\noindent \textbf{Line 6.} The design in this line is rank $3$ symmetric $(176,126,14)$ design \cite{a:Dempwolff2001} with automorphism group $\HS$. The point-stabiliser (block-stabiliser) is $\PSU_{3}(11)$. The design $\Dmc_{6}$ is the complement of the design in Line 5, and hence the base block of $\Dmc_{6}$ is $\Pmc\setminus B_{5}$, where $\Pmc:=\{1,\ldots,176\}$ and $B_{5}$ is as in \ref{eq:B5}. This design is denoted by $\Dmc_{5}$ in \cite{a:Zhou-sym-sporadic}.\smallskip  

\noindent \textbf{Line 7.} This design also arose from studying rank $3$ automorphism groups of symmetric designs \cite{a:Dempwolff2001}, and it has parameter set $(14080, 12636, 11340)$ admitting $\Fi_{22}$ as flag-transitive and point-primitive automorphism group with the point-stabiliser and block-stabiliser isomorphic to $\POm_7(3)$ (non-conjugate). This design is denoted by $\Dmc_{_{6}}$ in \cite{a:Zhou-sym-sporadic}. 

\section{Preliminary}\label{sec:pre}

In this section, we state some useful facts in both design theory and group theory, and then we establish our method in Section \ref{sec:method} in order to analyse symmetric designs admitting sporadic simple groups as their flag-transitive automorphism groups. 

\begin{lemma}\label{lem:six}{\rm \cite[Lemma 2.1]{a:ABD-PSL2}}
Let $\Dmc$ be a symmetric $(v,k,\lambda)$ design, and let $G$ be a flag-transitive automorphism group of $\Dmc$. If $\alpha$ is a point of $\Dmc$ and $H=G_{\alpha}$, then
\begin{enumerate}[\rm \quad (a)]
\item $k(k-1)=\lambda(v-1)$;
\item $k\mid |H|$ and $\lambda v<k^2$;
\item $k\mid \lambda\gcd(v-1,|H|)$;
\item $|G|< |H|^{3}$;
\item $k\mid \lambda e$, for all nontrivial subdegrees $e$ of $G$.
\end{enumerate}
\end{lemma}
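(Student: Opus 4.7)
The plan is to establish the five parts by elementary double counting combined with orbit-stabiliser, exploiting flag-transitivity at two different levels of the flag structure.

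For (a) I would fix a point $\alpha$ and count pairs $(B,\gamma)$ with $B$ a block through $\alpha$ and $\gamma\in B\setminus\{\alpha\}$. Counting via blocks gives $k(k-1)$ (in a symmetric design every point lies in exactly $k$ blocks), while counting via $\gamma$ gives $\lambda(v-1)$ by the defining pairwise intersection property. Part (b) then splits into two steps: flag-transitivity makes $H=G_\alpha$ transitive on the $k$ blocks through $\alpha$, so orbit-stabiliser gives $k\mid|H|$; and rewriting (a) as $k^{2}=\lambda v+(k-\lambda)$ together with $k>\lambda$ (forced by $k<v-1$, since $\lambda=k(k-1)/(v-1)$) yields $\lambda v<k^{2}$. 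Part (d) is then immediate from $|G|=v|H|$: using $\lambda v<k^{2}\leq|H|^{2}$ and $\lambda\geq 1$ one gets $v<|H|^{2}$, hence $|G|<|H|^{3}$.

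The key calculation is (e), from which (c) is extracted. For a nontrivial $H$-orbit $\Gamma$ on $\Pmc\setminus\{\alpha\}$ of length $e$, count the pairs $(\gamma,B)$ with $\gamma\in\Gamma$ and $\alpha,\gamma\in B$. Since $H$ is transitive on the $k$ blocks through $\alpha$, every such block meets $\Gamma$ in the same number $m$ of points, so the count equals $km$ on one side and $e\lambda$ on the other, giving $k\mid\lambda e$. For (c), set $k':=k/\gcd(k,\lambda)$; by (e), $k'$ divides every nontrivial subdegree $e_i$ of $H$ on $\Pmc\setminus\{\alpha\}$, hence $k'\mid\gcd(e_1,\dots,e_t)$. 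Since each $e_i$ divides $|H|$ and $\sum_i e_i=v-1$, the common divisor $\gcd(e_1,\dots,e_t)$ divides $\gcd(v-1,|H|)$, so $k'\mid\gcd(v-1,|H|)$, and multiplying by $\gcd(k,\lambda)$ gives $k\mid\lambda\gcd(v-1,|H|)$.

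The only subtle point is the equality $km=e\lambda$ in (e): it depends on $H$ acting transitively on blocks through $\alpha$, which is where flag-transitivity (rather than mere point-transitivity) of $G$ is essential. Once this is in place all remaining steps are routine arithmetic, and the five parts follow in the order (a), (b), (d), (e), (c).
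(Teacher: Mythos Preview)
Your argument is correct in all five parts. The paper itself gives no proof of this lemma; it simply cites \cite[Lemma~2.1]{a:ABD-PSL2} and moves on. What you have written is precisely the standard derivation that underlies that citation: the Fisher-type double count for (a), orbit--stabiliser under flag-transitivity for the divisibility in (b) and the constancy of $|B\cap\Gamma|$ in (e), the arithmetic rearrangement $k^{2}=\lambda v+(k-\lambda)$ for the inequality in (b), and the deduction of (c) from (e) via $k/\gcd(k,\lambda)$ dividing every nontrivial subdegree and hence both $|H|$ and $v-1$. There is nothing to compare against in the present paper, and nothing to correct in your write-up.
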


Lemma~\ref{lem:comp} below is an elementary result that is useful in computational matter for finding potential parameter sets. An algorithm based on this lemma is given in \cite[Remark 3.8, Algorithm 1]{a:ABD-Exp}. We remark here that in \cite[Algorithm 1]{a:ABD-Exp}, we need to substitute the parameter ``$d$'' with parameter ``$t$'' as in this paper ``$d$'' is the number of the imprimitivity classes.

\begin{lemma}\label{lem:comp}\cite[Lemma~3.7]{a:ABD-Exp}
Let $\Dmc$ be a symmetric $(v,k,\lambda)$ design $\Dmc$, and let $mk=\lambda t$, where $t$ is a divisor of $v-1$, for some positive integer $m$. Then 
\begin{enumerate}[{\rm  (a)}]
\item $m\mid (k-1)$, and so $\gcd(m,k)=1$;
\item $\lambda=\lambda_{1}\lambda_{2}$, where $\lambda_{1}=\gcd(\lambda,k-1)$ and $\lambda_{2}=\gcd(\lambda,k)$;
\item If $k_{1}:=(k-1)/\lambda_{1}$ and $k_{2}:=k/\lambda_{2}$, then $v-1=k_{1}k_{2}$, where $k_{2}$ divides $t$. Moreover, $\lambda_{1}$ divides $m$,
$\lambda_{1}< k_{2}$ and $\gcd(\lambda_{1},k_{2})=1$.
\end{enumerate}
\end{lemma}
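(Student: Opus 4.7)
My plan is to exploit the symmetric-design identity $\lambda(v-1)=k(k-1)$ from Lemma~\ref{lem:six}(a) together with the hypothesis $mk=\lambda t$ and the coprimality $\gcd(k,k-1)=1$. The whole lemma is an exercise in tracking how the prime factors of $\lambda$ and of $v-1$ split into a ``$(k-1)$-part'' and a ``$k$-part''; the three items (a)--(c) are then extracted in order.

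For (a), I would write $v-1=ts$ for a positive integer $s$, substitute $\lambda t=mk$ into $\lambda(v-1)=k(k-1)$, and cancel $k$ to obtain $ms=k-1$; this yields both $m\mid k-1$ and $\gcd(m,k)\mid\gcd(k-1,k)=1$. For (b), the coprimality of $k-1$ and $k$ immediately forces $\gcd(\lambda_1,\lambda_2)=1$, so $\lambda_1\lambda_2\mid\lambda$. The reverse divisibility is a prime-power argument: since $\lambda\mid k(k-1)$ by the design identity and $\gcd(k,k-1)=1$, any prime power $p^a$ exactly dividing $\lambda$ must divide exactly one of $k$ or $k-1$, and hence divides $\lambda_2$ or $\lambda_1$ respectively. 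Assembling the contributions over all primes gives $\lambda\mid\lambda_1\lambda_2$, and together with the previous direction, $\lambda=\lambda_1\lambda_2$.

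For (c), substituting $\lambda=\lambda_1\lambda_2$, $k-1=\lambda_1 k_1$ and $k=\lambda_2 k_2$ into $\lambda(v-1)=k(k-1)$ and cancelling $\lambda_1\lambda_2$ yields $v-1=k_1k_2$. Cancelling $\lambda_2$ in the hypothesis $mk=\lambda t$ produces $mk_2=\lambda_1 t$. Because $k_2\mid k$ and $\lambda_1\mid k-1$, coprimality of $k$ and $k-1$ gives $\gcd(\lambda_1,k_2)=1$; feeding this into $mk_2=\lambda_1 t$ simultaneously forces $k_2\mid t$ and $\lambda_1\mid m$. Finally, the inequality $\lambda_1<k_2$ becomes, after multiplication by $\lambda_2\geq 1$, the statement $\lambda<k$, which holds for any nontrivial symmetric design since $\lambda(v-1)=k(k-1)$ with $v-1>k-1$.

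I do not anticipate a genuine obstacle: the lemma is pure divisibility bookkeeping. The only point that needs a moment of care is the cancellation of $\lambda_2$ to pass from $mk=\lambda t$ to $mk_2=\lambda_1 t$, which is legitimate precisely because part (b) identifies $\lambda_2=\gcd(\lambda,k)$ as a factor common to $\lambda$ and $k$.
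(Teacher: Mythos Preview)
Your proof is correct in every step. The paper does not give its own proof of this lemma but merely cites \cite[Lemma~3.7]{a:ABD-Exp}; your argument---writing $v-1=ts$, cancelling $k$ to obtain $ms=k-1$, then using $\gcd(k,k-1)=1$ to split $\lambda$ and $v-1$ into their $k$-part and $(k-1)$-part---is exactly the natural divisibility bookkeeping behind the result, and nothing more is needed.
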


\begin{lemma}\cite[Theorem 1.1]{a:Praeger-imprimitive}\label{lem:imp}
Let $D = (\Pmc, \Bmc)$ be a nontrivial symmetric $(v,k,\lambda)$ design admitting a flag-transitive and point-imprimitive subgroup of automorphisms $G$ that leaves invariant a nontrivial partition $C$ of $\Pmc$ with $d$ classes of size $c$. Then there is a constant $\ell$ such that, for each $B\in \Bmc$ and $\Delta \in C$, the size $|B\cap \Delta|$ is either $0$ or $\ell$, and one of the following holds:
\begin{enumerate}[{\rm (a)}]
\item $k \leq \lambda(\lambda -3)/2$;
\item $(v, k, \lambda) = (\lambda^2(\lambda +2), \lambda(\lambda + 1), \lambda)$ with $(c, d, \ell) = (\lambda^2,\lambda +2,\lambda)$ or $(\lambda +2,\lambda^2, 2)$;
\item $(v, k,\lambda, c, d, \ell) = ( \frac{(\lambda+2)(\lambda^2-2\lambda+2)}{4} , \frac{\lambda^2}{2} , \lambda, \frac{\lambda+2}{2} , \frac{\lambda^2-2\lambda+2}{2} , 2)$, and either $\lambda \equiv 0 \mod 4$, or $\lambda = 2u^2$, where $u$ is odd, $u\geq 3$, and $2(u^2-1)$ is a square;
\item $(v, k, \lambda, c, d, \ell) = (\frac{(\lambda + 6)(\lambda^2+4\lambda-1)}{4} , \frac{\lambda (\lambda+5)}{2} , \lambda, \lambda+6, \frac{\lambda^2+4\lambda-1}{4},3)$, where $\lambda\equiv 1$ or $3$ $\mod 6$.
\end{enumerate}
\end{lemma}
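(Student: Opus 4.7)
The plan is first to establish the constancy of nonzero intersection sizes — an elementary consequence of flag-transitivity — then to combine this with double counting to obtain one fundamental parameter identity, and finally to run a case analysis that separates the four conclusions.

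\textbf{Step 1: constancy.} If $(\alpha,B)$ and $(\alpha',B')$ are flags and $g\in G$ maps the first to the second, then $g$ carries the class $\Delta\ni\alpha$ to the class $\Delta'\ni\alpha'$, since $G$ preserves $C$. Hence $|B\cap\Delta|=|B'\cap\Delta'|$, and this common nonzero value is a constant $\ell$.

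\textbf{Step 2: fundamental identity and contraction.} Let $s$ denote the number of classes meeting a block $B$; flag-transitivity implies block-transitivity, so $s$ is independent of $B$ and $k=s\ell$. Double counting the configurations $\{(\{\alpha,\beta\},B):\alpha\neq\beta\in\Delta\cap B,\ \Delta\in C\}$ — by block (each contributes $s\binom{\ell}{2}$) and by pair (each of the $d\binom{c}{2}$ same-class pairs lies in $\lambda$ blocks) — yields, after simplification using $v=cd$, the relation
\[
\lambda(c-1)=k(\ell-1).
\]
Equivalently, for each class $\Delta$ the incidence structure with point set $\Delta$ and block set $\{B\cap\Delta:|B\cap\Delta|=\ell\}$ is a $2$-$(c,\ell,\lambda)$ design. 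Combining this relation with $k(k-1)=\lambda(v-1)$ and $v=cd$ expresses $c$, $d$ and $s$ as explicit rational functions of $k$, $\lambda$ and $\ell$.

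\textbf{Step 3: bounding $\ell$ when (a) fails.} Assume $k>\lambda(\lambda-3)/2$. The strategy is to eliminate large values of $\ell$ by combining Fisher's inequality on the contracted $2$-$(c,\ell,\lambda)$ design, the bound $\lambda v<k^2$ from Lemma~\ref{lem:six}, and a count of pairs $(B_1,B_2)$ of distinct blocks whose intersection meets some class $\Delta$ in at least two points (constrained by the global identity $|B_1\cap B_2|=\lambda$). This forces either $\ell=\lambda$ — recovering family (b) after substitution into the formulas of Step~2 — or $\ell\in\{2,3\}$. Feeding $\ell=2$ and $\ell=3$ back into those same formulas then produces the rational parametrizations listed in (c) and~(d).

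\textbf{Step 4: number-theoretic refinements.} The congruence and square conditions in (c) (either $\lambda\equiv 0\pmod 4$, or $\lambda=2u^2$ with $2(u^2-1)$ a square) and in (d) ($\lambda\equiv 1$ or $3\pmod 6$) then come from imposing integrality of the closed-form expressions for $c$, $d$ and $s$, together with a Pell-type analysis of the discriminant of the quadratic that governs the $\ell=2$ case. The main obstacle is Step~3: eliminating intermediate values $\ell\geq 4$ (other than $\ell=\lambda$) requires a delicate combination of the counts above with the rigidity that each block meets each intersecting class in the \emph{same} number $\ell$ of points, and it is this rigidity — rather than mere parameter inequalities — that drives the sharp trichotomy.
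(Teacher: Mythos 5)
First, note that the paper does not prove this statement at all: Lemma~\ref{lem:imp} is quoted verbatim from Praeger and Zhou \cite[Theorem 1.1]{a:Praeger-imprimitive}, so there is no in-paper argument to compare against; your proposal must stand on its own as a proof of the cited theorem. Your Steps 1 and 2 are correct and standard: flag-transitivity plus invariance of $C$ gives the constant $\ell$, the double count gives $\lambda(c-1)=k(\ell-1)$, and together with $k=s\ell$, $v=cd$ and $k(k-1)=\lambda(v-1)$ one gets $c=1+k(\ell-1)/\lambda$, $s=k/\ell$ and $d=\bigl(k(k-1)+\lambda\bigr)/\bigl(k(\ell-1)+\lambda\bigr)$.

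The genuine gap is Step 3, which is the entire content of the theorem. You assert that Fisher's inequality on the contracted $2$-$(c,\ell,\lambda)$ design, the bound $\lambda v<k^2$, and an unspecified count of block pairs ``force'' $\ell=\lambda$ or $\ell\in\{2,3\}$, but two of these three ingredients are vacuous here: $\lambda v<k^2$ is equivalent to $\lambda<k$ (immediate from $k(k-1)=\lambda(v-1)$ and $k<v-1$), and Fisher applied to the contracted design only yields $b_\Delta=cs\geq c$, i.e.\ $s\geq 1$. No mechanism is exhibited that excludes $4\leq\ell\leq\lambda-1$ or $\ell>\lambda$, and none of the named tools plausibly does so. Moreover, even granting the trichotomy on $\ell$, your claim that ``feeding $\ell=2$ and $\ell=3$ back into those same formulas produces the parametrizations in (c) and (d)'' is false as stated: for fixed $\ell$ and $\lambda$ the Step-2 formulas leave $k$ as a free parameter. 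What actually pins down $k$ is the integrality of $d$, i.e.\ $\bigl(k(\ell-1)+\lambda\bigr)\mid\bigl(k(k-1)+\lambda\bigr)$, which for $\ell=2$ reduces to $(k+\lambda)\mid\lambda(\lambda+2)$; only here does the hypothesis $k>\lambda(\lambda-3)/2$ enter, forcing $k+\lambda\in\{\lambda(\lambda+2),\lambda(\lambda+2)/2\}$ and hence $k=\lambda(\lambda+1)$ or $k=\lambda^2/2$ (families (b) and (c)), with an analogous computation for $\ell=3$ and for $\ell=\lambda$ (where $\bigl(t(\lambda-1)+1\bigr)\mid(t-1)^2$ with $k=\lambda t$ forces $t=\lambda+1$). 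This divisibility analysis, together with the exclusion of intermediate $\ell$, is the proof; your sketch names neither. As a lesser point, the square condition in (c) is the Bruck--Ryser--Chowla condition ($k-\lambda$ must be a square since $v$ is even), and the congruences in (d) come from integrality of $b_\Delta=\lambda c(c-1)/\ell(\ell-1)$, rather than from ``a Pell-type analysis of the discriminant'' of the parametrizing quadratic.
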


With the same notation as in Lemma~\ref{lem:imp}, we have the following equalities:
\begin{align}
v&=cd,\label{eq:v-cd} \\
k&=\ell s, \label{eq:k-l} \\
\lambda(c-1)&=k(\ell-1),\label{eq:lam-1}
\end{align}
where $s$ is the number of classes of $\Cmc$ that intersect a block $B$ nontrivially, and $\ell>1$ and $s>1$.

\begin{lemma}\label{lem:m}
Let $\Dmc =(\Pmc,\Bmc)$ be a nontrivial symmetric $(v,k,\lambda)$ design admitting a flag-transitive and point-imprimitive automorphism group $G$, and let $(\alpha,B)$ be a flag of $\Dmc$. Let  also $M$ and $N$ be the maximal subgroups of $G$ containing the  point-stabiliser $H:=G_{\alpha}$ and the  block-stabiliser $K:=G_{B}$, respectively. Then
\begin{enumerate}[\rm (a)]
\item $v=|G:H|=|G:K|$;
\item $|G:M|\neq v$, and both $|G:M|$ and $|G:N|$ divide $v$;
\item $k=|H:H_{B}|$ and the index of a maximal subgroup of $H$ containing $H_{B}$ divides $k$;
\item $k=|K:K_{\alpha}|$ and the index of a maximal subgroup of $K$ containing $K_{\alpha}$ divides $k$. 
\end{enumerate} 
\end{lemma}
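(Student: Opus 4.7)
The plan is to prove all four parts by combining the standard facts that flag-transitivity implies both point- and block-transitivity with the orbit-stabiliser theorem and basic subgroup index arithmetic. None of the parts requires deep machinery; the work is essentially organising the observations.

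For part (a), I would first note that transitivity of $G$ on flags forces transitivity on points and (by taking the second projection) on blocks. Since $\Dmc$ is symmetric, $|\Pmc|=|\Bmc|=v$, and the orbit-stabiliser theorem gives $v=|G:G_\alpha|=|G:G_B|=|G:H|=|G:K|$.

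For part (b), the key observation is the standard correspondence between primitivity and maximality of point-stabilisers: a transitive action of $G$ is primitive if and only if the stabiliser of a point is a maximal subgroup. Since $G$ is assumed point-imprimitive, $H$ is a proper non-maximal subgroup of $G$, so the maximal subgroup $M$ containing it satisfies $H\lneq M \lneq G$. Consequently, $|G:M|$ is a proper divisor of $|G:H|=v$, in particular $|G:M|\ne v$. The divisibility $|G:M|\mid v$ and $|G:N|\mid v$ then fall out of the tower law $|G:H|=|G:M|\cdot |M:H|$ and its analogue for $K\le N$.

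For part (c), flag-transitivity implies that $H=G_\alpha$ is transitive on the set of blocks incident with $\alpha$. Because $\Dmc$ is symmetric, the replication number equals $k$, so there are exactly $k$ such blocks. The orbit-stabiliser theorem, applied to the action of $H$ on blocks through $\alpha$ with $B$-stabiliser $H_B=G_{\alpha,B}$, gives $|H:H_B|=k$. The second assertion then follows from the fact that for any chain $H_B\le L\le H$ we have $|H:L|\cdot |L:H_B|=|H:H_B|=k$, so $|H:L|\mid k$ for any subgroup $L$ of $H$ containing $H_B$, and in particular for a maximal such $L$. Part (d) is entirely dual: flag-transitivity forces $K=G_B$ to act transitively on the $k$ points of $B$ with $\alpha$-stabiliser $K_\alpha=G_{\alpha,B}$, and the same orbit-stabiliser and index-tower argument yields $k=|K:K_\alpha|$ together with the divisibility statement. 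There is no substantive obstacle: everything is a one-line deduction once the primitivity/maximality correspondence and the orbit-stabiliser theorem are brought to bear.
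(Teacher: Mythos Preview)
Your proof is correct and follows essentially the same approach as the paper's own proof, which likewise reduces (a) to point- and block-transitivity, (b) to the primitivity/maximality correspondence for $H$, and (c)--(d) to the transitivity of $H$ on blocks through $\alpha$ and of $K$ on $B$. Your write-up simply makes explicit the orbit-stabiliser and index-tower steps that the paper leaves to the reader.
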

\begin{proof}
Part (a) follows from the fact that $G$ is point-transitive and block-transitive. The fact that $G$ is point-imprimitive implies that $M\neq H$, or equivalently, $|G:M|\neq |G:H|=v$, and so part (b) holds. Since $G$ is flag-transitive, $H$ is transitive on the set of blocks containing $\alpha$ and $K$ is transitive on $B$, and hence parts (c)-(d) follow.  
\end{proof}

\section{Method}\label{sec:method}

In this section, we  describe our method to find candidate parameters which are listed in Tables \ref{tbl:poss-A}-\ref{tbl:poss-B} in Appendix. In what follows, we suppose that $\Dmc = (\Pmc, \Bmc)$ is a nontrivial symmetric $(v,k,\lambda)$ design admitting a flag-transitive and point-imprimitive subgroup $G$ of automorphisms of $\Dmc$. Suppose also that $G$ is an almost simple group whose socle is a sporadic simple group. Let $(\alpha,B)$ be a flag of $\Dmc$, and let  $M$ and $N$ be the maximal subgroups of $G$ containing the  point-stabiliser $H:=G_{\alpha}$ and the  block-stabiliser $K:=G_{B}$, respectively. The possibilities for $M$ and $N$ can be read of from GAP \cite{GAP4} and Atlas \cite{b:Atlas}. We use the following approach to obtain the possible parameter sets.

\begin{enumerate}[\rm \bf(1)]
\item By Lemma~\ref{lem:m}(a), we have that $|H|=|K|$, and since $G$ is flag-transitive and $M$ and $N$ are respectively maximal subgroups containing $H$ and $K$, it follows that both $M$ and $N$ satisfy the large property as in Lemma~\ref{lem:six}(d), that is to say, $|G|\leq |M|^3$ and $|G|\leq |N|^{3}$. This enables us to obtain the  possibilities for both $M$ and $N$ by \cite[Proposition~6.2]{a:AB-Large-15}. Here, we also use the database stored in GAP \cite{GAP4}. 

\item For a fixed $M$ obtained in (1), since $v=|G:H|=|G:M|\cdot |M:H|$, we conclude that  $v=z\cdot|G:M|$, for some divisor $z$ of $|M|$. Therefore, for each $M$, we can find possible parameters $v$.  
\item For each $M$ and each (associated) $v$ obtained in (2), we know by Lemma~\ref{lem:six}(b) that $k$ divides $|H|$, so does $|M|$. We now use Lemma~\ref{lem:comp}. Let $t:=\gcd(v-1,|M|)$. Then for each divisor $k_{2}$ of $t$, let $k_{1}:=(v-1)/k_{2}$. For $\lambda_{1}<k_{2}$ with $\gcd(\lambda_{1},k_{2})=1$, let $k:=1+k_{1}\lambda_{1}$ and $\lambda:=k(k-1)/(v-1)$. We check if $\lambda$ is an integer, $k$ divides $M$ and $\lambda v<k^{2}$. This produces a list of possible parameters $(v,k,\lambda)$ for each $M$. Here we use modified version of \cite[Algorithm 1]{a:ABD-Exp} with changing $\lambda_{1}\leq k_{2}/2$ to $\lambda_{1}\leq k_{2}$ and substituting the restrictive condition $2k<v$ to $k<v-1$. This is for finding potential complement parameter sets. 
    \item Since $k:=|K:K_{\alpha}|$ divides $|K|=|H|=|G|/v$, it follows that $k$ divides $|N|$, where $N$ is a maximal subgroup of $G$ containing $K$. Note also that $|G:N|$ divides $v=|G:K|$. This leaves $22$ groups with $766$ tuples $(M,N,(v,k,\lambda))$ listed in Tables \ref{tbl:poss-A}-\ref{tbl:poss-B} for further consideration. For each group, the number of such candidate parameter sets is recorded in Table \ref{tbl:stats}. As we are also interested in point-imprimitive action, for each parameter set $(v,k,\lambda)$, we can also find parameters $(c,d,\ell)$ using 
\eqref{eq:v-cd}, \eqref{eq:k-l} and \eqref{eq:lam-1}. By Lemma~\ref{lem:imp}, we also can associate one of the type ``a'', ``b'', ``c'' and ``d'' to the parameter set $(v,k,\lambda,c,d,\ell)$ according to the part of Lemma~\ref{lem:imp} that the parameter set $(v,k,\lambda,c,d,\ell)$ satisfies.  
    \item For each pairs $(M,N)$, we check if $M$ (respectively, $N$) has a subgroup of index $\i_{H}:=v/|G:M|$ (respectively, $\i_{K}:=v/|G:N|$). This suggests the  pairs $(H,K)$ of point-block stabilisers. In the case where $M$ is of large order, we look for a maximal subgroup of $M$ whose index divides $\i_{H}$. This gives the  possibilities for oversubgroups of $H$, and then we continue this process to find possible candidates for $H$. We do the same when $N$ is of large order.
\item For each subgroup $H$ obtained in (5), we consider the right coset action of $G$ on the  set of the right cosets of $H$ in $G$, and obtain the $H$-orbits and their lengths (subdegrees). Since $G$ is flag-transitive, we check if $k$ is a divisor of $\lambda e$, for all nontrivial subdegrees $e$ of $G$, see Lemma~\ref{lem:six}(e). This also eliminates some cases.  
\item For the remaining pairs $(H,K)$ and parameters $(v,k,\lambda)$ that have not been eliminated in (6), we consider the right coset action of $G$ on the  set of the right cosets of $H$ in $G$, and obtain the $K$-orbits and their lengths in this action. Since $G$ is flag-transitive, the length of one of the $K$-orbits has to be $k$. If such an orbit $B$ exists, then we check if $B$ is a base block for a symmetric design using the software package ``\verb|design|'' in GAP \cite{GAP4}.
\end{enumerate}

\begin{table}
    \scriptsize
    \caption{The number of parameter sets for some sporadic almost simple groups.}\label{tbl:stats}
    \begin{tabular}{lc|lc|lc|lc|lc}
        \noalign{\smallskip}\hline\noalign{\smallskip}
        Group & \# parameters &
        Group & \# parameters &
        Group & \# parameters &
        Group & \# parameters &
        Group & \# parameters \\
        \noalign{\smallskip}\hline\noalign{\smallskip}
        $\M_{11}$ & $3$ & 
        $\J_2$ & $0$ & 
        $\Suz{:}2$ & $51$ & 
        $\ON{:}2$ & $52$ &
        $\Fi_{24}'{:}2$ & $298$ \\ 
        $\M_{12}$ & $26$ & 
        $\J_{2}{:}2$ & $9$ & 
        $\McL$ & $2$ &  
        $\Co_{1}$ & $0$ & 
        $\HN$ & $0$ \\ 
        $\M_{12}{:}2$ & $11$ & 
        $\J_{3}$ & $0$ & 
        $\McL{:}2$ & $35$ & 
        $\Co_{2}$ & $0$ & 
        $\HN{:}2$ & $59$ \\ 
        $\M_{22}$ & $6$ & 
        $\J_{3}{:}2$ & $11$ &
        $\Ru$ & $0$ & 
        $\Co3$ & $1$ 
        & $\Th$ & $0$ \\
        $\M_{22}{:}2$ & $19$ & 
        $\J_{4}$ & $0$ & 
        $\He$ & $1$ & 
        $\Fi_{22}$ & $12$ & 
        $\B$ & $0$ \\
        $\M_{23}$ & $1$ & 
        $\HS$ & $29$ & 
        $\He{:}2$ & $34$ & 
        $\Fi_{22}{:}2$ & $57$ 
        & $\M$ & $0$ \\
        $\M_{24}$ & $4$ & 
        $\HS{:}2$ & $45$ & 
        $\Ly$ & $0$ & 
        $\Fi_{23}$ & $0$ & \\
        $\J_{1}$ & $0$ & 
        $\Suz$ & $0$ & 
        $\ON$ & $0$ & 
        $\Fi_{24}'$ & $0$ & \\
        \noalign{\smallskip}\hline\noalign{\smallskip}
    \end{tabular} 
\end{table}

\section{Proof of the main results}\label{sec:proof}

In this section, we prove Theorem \ref{thm:main} and Corollary~\ref{cor:main}. In order to prove Theorem \ref{thm:main}, we follow the steps described in Section \ref{sec:method}.\smallskip 

\noindent{\textbf{Proof of Theorem~\ref{thm:main}}} 
Suppose that $\Dmc = (\Pmc, \Bmc)$ is a nontrivial symmetric $(v,k,\lambda)$ design admitting a flag-transitive, point-imprimitive subgroup $G$ of automorphisms of $\Dmc$. Suppose also that $G$ is an almost simple group whose socle is a sporadic simple group. Let $(\alpha,B)$ be a flag of $\Dmc$, and let  $M$ and $N$ be the maximal subgroups of $G$ containing the  point-stabiliser $H:=G_{\alpha}$ and the  block-stabiliser $K:=G_{B}$, respectively. We apply the method described in Section \ref{sec:method}, and as it is mentioned in step 4, there are 22 groups with total number $766$ of parameters which possibly give rise to a $2$-design. We divide these groups in 2 classes:

\begin{enumerate}[\rm (A)]
\item $\M_{11}$, $\M_{12}$, $\M_{12}{:}2$, $\M_{22}$, $\M_{22}{:}2$, $\M_{23}$, $\M_{24}$, $\J_{2}{:}2$, $\J_{3}{:}2$, $\HS$, $\HS{:}2$, $\McL$, $\McL{:}2$, $\He$, $\He{:}2$, $\Co_{3}$; 
\item  $\Suz:2$, $\ON:2$, $\HN:2$, $\Fi_{22}$, $\Fi_{22}:2$, $\Fi_{24}':2$.
\end{enumerate}

\noindent \textbf{(A)} For the groups in class A, once we follow  the steps in Section \ref{sec:method}, all possibilities but $6$ cases can be ruled out in steps 5 or 6. We record all the possibilities for the groups in Class A in Table \ref{tbl:poss-A}. In the last column of Table \ref{tbl:poss-A}, we use the notation ``nsg'' or ``nsd'', or we leave it blank. We have $6$ possible parameters in the latter case which are also separately recorded in Table \ref{tbl:M12-poss}.

\begin{table}
\scriptsize 
\caption{ The remaining possible parameters for the groups in class A.}\label{tbl:M12-poss}
\begin{tabular}{lllllllllll}
\hline 
$G$ & $M$ & $N$ & $(\nr(M),\nr(N))$ &  $(\i_{H}, \i_{K})$ & $v$ & $k$ & $\lambda$ & $c$ & $d$ & $\ell$  \\
\hline  
$\M_{12}$ & $\M_{11}$ & $\M_{11}$ & 
$(1 , 1)$ & 
$(12 , 12)$ & 
$144$ & $66$ & $30$ & $12$ & $12$ & $6$  \\ 
$\M_{12}$ & $\M_{11}$ & $\M_{11}$ & 
$(1 , 2)$ & 
$(12 , 12)$ & 
$144$ & $66$ & $30$ & $12$ & $12$ & $6$ \\
$\M_{12}$ & $\M_{11}$ & $\PSL_2(11)$ & 
$(1 , 5)$ & 
$(12 , 1)$ & 
$144$ & $66$ & $30$ & $12$ & $12$ & $6$ \\ 
$\M_{12}$ & $\M_{11}$ & $\M_{11}$ & 
$(2 , 1)$ & 
$(12 , 12)$ & 
$144$ & $66$ & $30$ & $12$ & $12$ & $6$  \\ 
$\M_{12}$ & $\M_{11}$ & $\M_{11}$ & 
$(2 , 2)$ & 
$(12 , 12)$ & 
$144$ & $66$ & $30$ & $12$ & $12$ & $6$ \\ 
$\M_{12}$ & $\M_{11}$ & $\PSL_2(11)$ & 
$(2 , 5)$ & 
$(12 , 1)$ & 
$144$ & $66$ & $30$ & $12$ & $12$ & $6$  \\ 
\hline 
\multicolumn{11}{l}{\scriptsize $\i_{H}:=|M:H|$ and $\i_{K}:=|N:K|$.}\\ 
\end{tabular}
\end{table}

The notation ``nsg'' (no subgroup) in Table \ref{tbl:poss-A} indicates that the case can be ruled out in step 6, that is to say, the maximal subgroup $M$ (or $N$) has no subgroup of index $\i_H$ (or $\i_K$). For example, if $G=\M_{12}{:}2$, then the point-stabiliser $H$ is contained in $M=\M_{12}$. Then $\M_{12}$ should have a subgroup of index $v/2$, where $v\in\{16, 22, 40, 66, 88, 96, 144, 160, 396\}$, which is impossible.

If ``nsd'' (no subdegree) is recorded in the last column of Table \ref{tbl:poss-A}, then it means that there exist some subgroup $H$ and $K$ with $|M:H|=\i_{H}$ and $|N:K|=\i_{K}$. By considering $H$ as a subgroup of the permutation group $G$ on $v=|G:H|$ points, we observe that there exist some nontrivial subdegrees $e$ of $G$ such that $\lambda e$ is not divisible by $k$, which violates Lemma~\ref{lem:six}(e). In this case, we also present the smallest such a subdegree in this column. 
For example, let $G=\HS$ and $(v,k,\lambda)=(8800, 420,20)$, and $M=\PSU_3(5):2$  and $N=\A_{8}:2$. Then $M$ has a subgroup $H=\S_{7}$ of index $|M:H|=50$. Then the permutation group   $G$ on the set of right cosets of $H$ with $8800$ points has nontrivial subdegrees $e\in\{7$, $42$, $126$, $210$, $252$, $630$, $1260$, $2520\}$, and it is easy to see that $k=420$ does not divide $\lambda e=20e$ for any such $e$, which is a contradiction.

This leaves  the possibilities recorded in Table \ref{tbl:M12-poss} in which  $G=\M_{12}$. Moreover, $(v,k,\lambda)=(144,66,30)$ for which the point-stabiliser $H$ is a subgroup of the maximal subgroup $M_{1}$ or $M_{2}$, where $M_{i}\cong \M_{11}$, and the block-stabiliser $K$ is a subgroup of the maximal subgroup $N_1\cong \M_{11}$, $N_{2}\cong \M_{11}$ or $N_{3}\cong \PSL_{2}(11)$. We note here that $M_{1}$ and $M_{2}$ (respectively, $N_{1}$ and $N_{2}$) are not conjugate in $G$.  It is also easy to check that $H=M_{1}\cap M_{2}$  and it is isomorphic to $\PSL_{2}(11)$. If $K\leq N_{3}$, then since $|G:N_{3}|=144=|G:K|$, we conclude in this case that $K=N_{3}\cong \PSL_{2}(11)$ is maximal in $G$, and the $K$-orbits are of length $12$ and $132$, which is a contradiction as $k=66$.

Suppose that $K\leq N_{1}$. Then $K=N_{1}\cap N_{2}$ which is conjugate to $H\cong \PSL_{2}(11)$. Thus $K\cong \PSL_{2}(11)$, and the length of $K$-orbits in the right coset action of $G$ on the set of right cosets of $H$ in $G$ are $11$, $11$, $55$ and $66$. The $K$-orbit $B$ of length $66$ gives rise to the symmetric $(144,66,30)$ design $\Dmc=(\Pmc,B^{G})$, where $G=\M_{12}$, $\Pmc=\{1,\ldots,144\}$ and $B$ is the base block
\begin{align}\label{eq:D-M12-144}
\nonumber B:= \{&1, 3, 4, 5, 8, 11, 13, 14, 16, 19, 20, 22, 26, 27, 28, 32, 33, 36, 37, 38, 41, 42, 44,\\ 
\nonumber&48, 49, 
50, 51, 58, 59, 60, 64, 66, 68, 70, 71, 72, 73, 74, 75, 82, 83, 84, 98, 102,\\
\nonumber& 103, 105, 106, 108, 
109, 110, 112, 115, 116, 118, 121, 123, 126, 128, 129, 130, \\
&135, 136, 139, 141, 142, 143\}.
\end{align} 
We denote this design by $\Dmc_{1}$ which is the design in Line 1 of Table \ref{tbl:main}. We note here that the full automorphism group of $\Dmc_{1}$ is $\M_{12}:2$ which is point-primitive while $\M_{12}$  is point-imprimitive with imprimitive class representatives $\{1, 13, 35, 38, 57, 62, 81, 91, 103, 109, 128, 140\}$ and $\{1, 2, 3, 4, 5, 6, 7, 8, 9, 10, 11, 12\}$.
This shows that $(c,d,\ell)=(12,12,6)$. The case where $K\leq N_{2}$ can be treated similarly and we find the same design. In fact, each pair of designs obtained in this way (the orbit of length $66$ of the intersection of two non-conjugate maximal subgroups isomorphic to $\M_{11}$) are isomorphic. 
\smallskip
 
\noindent \textbf{(B)} For the groups in class B, the  possible parameters are listed in Table \ref{tbl:poss-B}. Here, we prove that none of these groups gives rise to a $2$-design. Indeed, for each group, we show that one of steps 5-6 fails, but for the computation matters, we need to check these cases separately.\smallskip 

Let $G=\Fi_{22}$. In this case, the maximal subgroup $M$ whose index divides $v$ is one of the two non-conjugate subgroups (isomorphic to) $\POm_7(3)$. Then $H$ is a subgroup of $M$ of index $14$, $40$ or $105$. However, $\POm_7(3)$ has no subgroup of such an index as the smallest index of the maximal subgroups of $\POm_7(3)$ is $351$, see \cite[p. 109]{b:Atlas}.\smallskip

Let now $G=\Fi_{22}{:}2$. Then we have $190$ possible parameter sets $(v,k,\lambda)$ and $H$ is contained in one of the two non-conjugate maximal subgroups of $G$ being isomorphic to $\Fi_{22}$. Let $M$ be one of these subgroups. Then $\i_{H}=|M:H|$ is $8$, $20$, $32$, $33$, $44$, $48$, $56$, $60$, $72$, $80$, $88$, $108$, $128$, $143$, $144$, $165$, $168$, $182$, $189$, $198$, $224$, $243$, $270$, $273$, $280$, $320$, $336$, $350$, $448$, $455$, $540$, $585$, $648$, $704$, $728$, $864$, $
1008$, $1320$, $2048$, $2800$, $2912$, $4032$, $4050$, $4752$, $6804$, $7040$, $7488$, $10368$, $11200$, $18480$, $25344$, $52800$, $81648$, $98560$, $281600$ or $739200$. By inspecting the indices of the maximal subgroups of $M\cong \Fi_{22}$ dividing $\i_{H}$, we observe that $H$ is contained in one of two non-conjugate maximal subgroups $\POm_7(3)$ of $M$ of index $7$ or $20$, and so $\POm_7(3)$ has to have a subgroup of index $7$ or $20$, which is impossible as the smallest index of the maximal subgroups of $\POm_7(3)$ is $351$ by \cite[p. 109]{b:Atlas}.

\begin{table}
\scriptsize 
\caption{ The remaining possible parameters for the groups in class B.}\label{tbl:poss-B-rem}
\begin{tabular}{llp{13.6cm}}
\hline 
$G$ & $M$ & \multicolumn{1}{c}{The indices of maximal subgroups of $M$ }  \\
\hline  
$\Suz:2$ &
$\Suz$ & 
1782, 22880, 32760, 135135, 232960, 370656, 405405, 926640, 1216215, 2358720, 3203200, 
10378368, 17297280, 39916800, 
57480192, 177914880 \\
$\ON:2$ &
$\ON$ &
122760, 2624832, 2857239, 17778376, 30968784, 42858585, 58183776, 182863296 \\
$\HN:2$ &
$\HN$ &
1140000, 1539000, 16500000, 74064375, 108345600, 136515456, 164587500, 263340000, 264515625, 364041216, 1436400000, 
2926000000, 4681600000 \\
$\Fi_{24}':2$ &
$\Fi_{24}'$ &
306936, 4860485028, 14081405184, 50177360142, 125168046080, 258870277120,
 2503413946215, 5686767482760, 7819305288795, 
70094105804800, 91122337546240, 100087107696576, 155717756992512, 633363728392395, 8212275503308800, 57650174033227776, 
2306006961329111040, 11859464372549713920, 103770313259809996800, 574727888823563059200, 3091639677809511628800 \\
\hline 
\end{tabular}
\end{table}

For the remaining groups, we make the same argument. The required information about this groups are given in Table \ref{tbl:poss-B-rem}. If $G$ is one of these groups as in the first column of Table~\ref{tbl:poss-B-rem}, then $H$ is contained in the maximal subgroup $M$ of $G$ (indeed the socle of $G$) as in the second column of the same table, and so  and the index of a maximal subgroup of $M$ containing $H$ (listed in the third column) has to divide $\i_H$ which is recorded in Table~\ref{tbl:poss-B}, which is impossible.\smallskip

\noindent{\textbf{Proof of Corollary~\ref{cor:main}}} 
If $G$ is point-imprimitive, then by Theorem~\ref{thm:main}, we obtain the design in Line 1 of Table \ref{tbl:main}. If $G$ is point-primitive, then by
\cite[Theorem~1]{a:Zhou-sym-sporadic}, we obtain one of the designs in Lines 2-7 of Table \ref{tbl:main}.

\section*{Acknowledgements}

The authors are grateful to Alice Devillers and Cheryl E. Praeger for supporting their visit to UWA (The University of Western Australia) during February–June 2023. They also thank Bu-Ali Sina University for the support during their sabbatical leave. The authors thank the reviewers for their comments and suggestions.

\bibliographystyle{elsart-num-sort}


\clearpage

\section*{Appendix}

In this section, we present two tables namely Tables \ref{tbl:poss-A} and \ref{tbl:poss-B} which include all the possible parameter sets and the required information about the groups and their subgroups which we obtained by applying our method described in Section~\ref{sec:method}. In these tables, the first column shows the almost simple group $G$ with socle a sporadic simple group. Recall that $H$ and $K$ are our potential point-stabiliser and block-stabiliser, respectively. The subgroups $M$ and $N$ in the second and third columns are the maximal subgroups of $G$ containing $H$ and $K$, respectively. In the fourth column, the first coordinate $\nr(M)$ indicates the number associated to the maximal subgroup $M$ of $G$ in the library of software package ``\verb|AtlasRep|'' in GAP \cite{GAP4}, and similarly, the second coordinate $\nr(N)$ shows the number associated to the subgroup $N$. In column 5, $\i_{H}=|M:H|$ and $\i_{K}=|N:K|$. The candidates of parameters $v$, $k$ and $\lambda$ are recorded in columns 6-8. In column 9, we give the type of imprimitivity of group $G$ which is one of the parts (a)-(d) in Lemma~\ref{lem:imp} that the  parameters $(v,k,\lambda)$ satisfy. In last column of Table \ref{tbl:poss-A}, we have two notation ``nsg'' and ``nsd'', or it is left blank. The notation ``nsg'' (no subgroup) indicates that the maximal subgroup $M$ (or $N$) has no subgroup of index $\i_H$ (or $\i_K$). The notation  ``nsd'' (no subdegree) means that there exist some subgroup $H$, but $H$ as permutation subgroup of $G$ on $v$ points does not satisfy Lemma\ref{lem:six}(e), that is to say, there exist a non-trivial $H$-orbit of length $e$ such that $\lambda e$ is not divisible by $k$,  and in such a case, the smallest subdegree $e$ is presented. If the last column of a row in Table \ref{tbl:poss-A} is left blank, then the case has been treated separately in Section \ref{sec:proof}.  

\scriptsize



\end{document}